\theoremstyle{plain}
\newtheorem{theorem}{Theorem}
\newtheorem{proposition}{Proposition}
\newtheorem{prob}[theorem]{Problem}
\newtheorem{corollary}{Corollary}
\newtheorem{example}{Example}
\theoremstyle{remark}
\newtheorem{definition}{Definition}
\newcommand{\aprob}{\begin{prob}}
\newcommand{\zprob}{\end{prob}}
\def\Ext{\operatorname{Ext}}
\def\co{\operatorname{co}}
\def\dens{\operatorname{dens}}
\def\N{\mathbb{N}}
\def\K{\mathbb{K}}
\def\R{\mathbb{R}}
\def\vp{\varpi}
\newcommand{\U}{\mathscr{U}}
\title{Stability constants and the homology of quasi-Banach spaces}
\author{F\'elix Cabello S\'anchez}
\address{Departamento de Matem\'{a}ticas, UEx, 06071-Badajoz, Spain}
\email{fcabello@unex.es}
\author{Jes\'us M. F. Castillo}
\address{Departamento de Matem\'{a}ticas, UEx, 06071-Badajoz, Spain}
\email{castillo@unex.es}
\thanks{This research has been supported in
part by project MTM2010-20190-C02-01 and the program Junta de
Extremadura GR10113 IV Plan Regional I+D+i, Ayudas a Grupos de
Investigaci\'on.}
\begin{document}


\bigskip

\bigskip

\maketitle

\begin{abstract}
We affirmatively solve the main problems posed by Laczkovich and
Paulin in \emph{Stability constants in linear spaces}, Constructive
Approximation 34 (2011) 89--106 (do there exist cases in which the
second Whitney constant is finite while the approximation constant
is infinite?) and by Cabello and Castillo in \emph{The long
homology sequence for quasi-Banach spaces, with applications},
Positivity 8 (2004) 379--394 (do there exist Banach spaces $X,Y$
for which $\Ext(X,Y)$ is Hausdorff and non-zero?).
In fact, we
show that these two problems are the same.
\end{abstract}

\thanks{AMS (2010) Class. Number. 41A30, 46B20, 46M15, 46M18,
46A16, 26B25, 52A05, 46B08.}

\section{Introduction}
In this paper we solve the main problems posed \cite{cabecastlong}
and in \cite{lp}. This introductory section is devoted to explain
the problems and the solutions we present. The reader is addressed
to Section~2 for all unexplained terms and notation.

The underlying idea of \cite{cabecastlong} was to study the spaces
$\Ext(X,Y)$ of exact sequences of quasi-Banach spaces $0\to Y \to
Z\to X\to 0$ by means of certain nonlinear maps $F:X\to Y$ called
quasilinear maps. In that paper we introduced (semi-quasi-normed)
linear topologies in the spaces $\Ext(X,Y)$. Such topologies are
completely natural in this setting since they make continuous all
the linear maps appearing in the associated homology sequences. We
encountered considerable difficulties in studying the Hausdorff
character of $\Ext(X,Y)$ for arbitrary spaces $X,Y$, although we
managed to produce some pairs of quasi-Banach spaces $X$ and $Y$
for which $\Ext(X,Y)$ is (nonzero and) Hausdorff. Those examples
invariably required that $\frak L(X,Y)=0$, which is impossible in
a Banach space context. Thus the main problem left open in
\cite[p. 387]{cabecastlong} was to know if there are Banach spaces
$X$ and $Y$ for which $\Ext(X,Y)$ is nonzero and Hausdorff. We
will show in this paper that this is indeed the case.\\

This paper also solves the main  problem in \cite{lp}. In fact, it
was this problem who suggested us the approach of the present
paper and the ``form'' that the examples in Section 6 had to have.
In  \cite{lp} Laczkovich and Paulin studied the stability of
approximately affine and Jensen functions on convex sets of linear
spaces. They show that, in many respects, it is enough to work the
case where the convex set is the unit ball of a Banach space.
Reduced to its basic elements the main problem in \cite{lp} can be
stated as follows. Let $B$ a convex set and $Y$ a Banach space. A
function $f:B\to Y$ is approximately affine if
$$
\|f(tx+(1-t)y)-tf(x)-(1-t)f(y)\|_Y\leq 1
$$
for every $x,y\in B$ and every $t\in[0,1]$. Suppose there is a
constant $A_0=A_0(B,Y)$ such that for every \emph{bounded}
approximately affine function $f:B\to Y$ there is a true affine
function $a:B\to Y$ such that $\|f(x)-a(x)\|\leq A_0$ for every
$x\in B$. Does it follow that for every (in general unbounded)
approximately affine function $f:B\to Y$ there is an affine
function $a$ such that $\sup_{x\in B}\|f(x)-a(x)\|<\infty$?

We show in Section 8 that the Examples in Section 6 provide a
negative answer to this problem. The idea is that if $B$ is the
unit ball of $X$, then every approximately affine function $f:B\to
Y$  is (up to a bounded perturbation) the restriction of some
quasilinear map $F:X\to Y$. Thus  $A_0(B,Y)$ and $K_0(X,Y)$ are
nearly proportional and so, it suffices to get a pair of Banach
spaces with $K_0(X,Y)$ finite but admitting a  nontrivial
quasilinear map $F:X\to Y$.\\

Let us describe the organization of the paper. Section~2 is
preliminary: it contains the necessary background on quasilinear
maps and extensions. In Section 3 we give a seemingly new
representation of $\Ext(X,Y)$, we denote by $\Ext^\vp(X,Y)$, which
is based on a relatively projective presentation of $X$ and
carries a natural topology. Section 4 is devoted to show that the
``old'' functor $\Ext$ and the ``new'' functor $\Ext^\vp$ are
naturally equivalent, both algebraically and topologically. In
Section 5 we give the basic criterion to determine when
$\Ext(X,Y)$ is Hausdorff. It turns out that $\Ext(X,Y)$ is
Hausdorff if and only if a certain parameter $K_0(X,Y)$ (depending
only on the behavior of the bounded quasilinear maps $F:X\to Y$)
is finite. In Section 6 we arrive to the main counter-examples: we
show that $\Ext(X,Y)$ is nonzero and Hausdorff if, for instance,
$X=\ell_p(I)$ with $1<p<\infty$ and $I$ uncountable and $Y=c_0$ is
the space of all null sequences with the sup norm. The results of
Section~7 confirm that $\Ext(X,Y)$ has a quite strong tendency to
not being Hausdorff.

The paper has been organized so that the interested reader can go
straight to the solution of Laczkovich-Paulin problem. The
shortest path is to read first Sections~\ref{qm} and \ref{ts},
then Section~\ref{count} and finally Section~\ref{affine}.

\section{Background on quasilinear maps and extensions of quasi-Banach spaces}

\subsection{Notation}
Let $X$ be a (real or complex) linear space. A semi quasi-norm is
a function $\varrho:X\to \R_+$ satisfying the following
conditions:
\begin{itemize}
\item[(a)] $\varrho(\lambda x)=|\lambda|\varrho(x)$ for every
$x\in X$ and every scalar $\lambda$. \item[(b)] There is a
constant $\Delta$ such that $\varrho(x+y)\leq
\Delta(\varrho(x)+\varrho(y))$ for every $x,y\in X$.
\end{itemize}
A semi quasi-normed space is a linear space furnished $X$ with a
semi quasi-norm $\varrho$ which gives rise to a linear topology,
namely the least linear topology for which the set $B_X=\{x\in X:
\varrho(x)\leq 1\}$ is a neighborhood of the origin. A semi
quasi-Banach space is a complete semi quasi-normed space. If
$\varrho(x)=0$ implies $x=0$, then $\varrho$ is said to be a
quasi-norm and $X$ is Hausdorff.

Throughout the paper we denote by $\Delta_\varrho$ (or $\Delta_X$
if there is no risk of confusion) the modulus of concavity of
$(X,\varrho)$, that is, the least constant $\Delta$ for which (b)
holds.

Let $X$ and $Y$ be quasi-Banach spaces. A mapping $F:X\to Y$ is
said to be homogeneous if $F(\lambda x)=\lambda F(x)$ for every
scalar $\lambda$ and every $x\in X$. A homogeneous map $F$ is
bounded if there is a constant $C$ such that $\|F(x)\|_Y\leq
C\|x\|_X$ for all $x\in X$; equivalently if it is uniformly
bounded on the unit ball. We write $\|F\|$ for the least constant
$C$ for which the preceding inequality holds. Clearly,
$\|F\|=\sup_{\|x\|\leq 1}\|F(x)\|$. This is coherent with the
standard notation for the (quasi-) norm of a linear operator.

\subsection{Quasilinear maps}\label{qm}
Let $X$ and $Y$ be quasi-Banach spaces. A map $F:X\to Y$ is said to be quasilinear if it is
homogeneous and satisfies an estimate
\begin{equation}\label{q}
\|F(x+y)-F(x)-F(y)\|_Y\leq Q\left(\|x\|_X+\|y\|_X\right)
\end{equation}
for some constant $Q$ and all $x,y\in X$. The smallest constant
$Q$ above shall be denoted by $Q(F)$.

All linear maps, continuous or not, are quasilinear and so are the bounded (homogeneous) maps.

Let us say that a quasilinear map $F$ is trivial if it is the sum of a linear map $L$ and a bounded homogeneous map $B$.
This happens if and only $F$ is at finite ``distance'' from $L$ in the sense that $\|F-L\|<\infty$.

Let us introduce the ``approximation constants'' for quasilinear maps as follows.

\begin{definition}
Given quasi-Banach spaces $X$ and $Y$ we denote by $K_0(X,Y)$ the
infimum of those constants $C$ for which the following statement
is true: if $F:X\to Y$ is a \emph{bounded} quasilinear map, then
there is a linear map $L:X\to Y$ such that $\|F-L\|\leq CQ(F)$.

We define $K(X,Y)$ analogously just omitting the word `bounded'.
\end{definition}

Observe that $K_0(X,Y)$ does not vary if we replace ``bounded'' by ``trivial'' in the definition.

\subsection{Twisted sums}\label{ts}
The link between quasilinear maps and the homology of (quasi-) Banach spaces is the following construction, due to Kalton~\cite{kalton78} and Ribe \cite{ribe}.
Suppose $F:X\to Y$ is quasilinear. We consider the product space $Y\times X$ and we furnish it with the following quasinorm:
$$
\|(y,x)\|_F=\|y-F(x)\|_Y+\|x\|_X.
$$
Following a long standing tradition, we denote the resulting quasi-Banach space by $Y\oplus_F X$. It is obvious that the operator $\imath:Y\to Y\oplus_F X$ sending $y$ to $(y,0)$ is an isometric embedding, so we may regard $Y$ as a subspace of  $Y\oplus_F X$. The connection between $X$ and $Y\oplus_F X$ is less obvious. Consider the operator $\pi:Y\oplus_F X\to X$ defined by $\pi(y,x)=x$.
We have $\|\pi\|\leq 1$, by the very definition of the quasi-norm in $Y\oplus_F X$. Besides,
 $\pi$ maps the unit ball of $Y\oplus_F X$ onto the unit ball of $X$ since $\pi(F(x),x)=x$ and $\|(F(x),x)\|_F=\|x\|_X$. Hence $\pi$ is a quotient map and, moreover, its kernel equals the image of $\imath$, so $X=(Y\oplus_F X)/\imath(Y)$. Most of the preceding information can be rephrased by saying that the diagram
\begin{equation}\label{yfx}
\begin{CD}
0 @>>>Y @>\imath>> Y\oplus_F X@>\pi>> X @>>> 0
\end{CD}
\end{equation}
is a short exact sequence of quasi-Banach spaces (``exact'' means that the kernel of each arrow
coincides with the image of the preceding one), or else, that $Y\oplus_F X$ is a twisted sum of $Y$ and $X$.

One may wonder under which conditions the sequence (\ref{yfx})
splits, that is, there is an operator $p: Y\oplus_F X \to Y$ such
that $p\circ \imath={\bf I}_Y$ (equivalently, the subspace
$\imath(Y)$ is complemented in $Y\oplus_F X$ through $\imath\circ
p$). The answer is quite simple: (\ref{yfx}) splits if and only if
$F$ is trivial. Indeed, every linear map, continuous or not, from
$Y\oplus_F X$ to $Y$ can be written as $p(y,x)=M(y)-L(x)$, where
$M:Y\to Y$ and $L:Z\to Y$ are linear. Moreover, $p\circ\imath={\bf
I}_Y$ if and only $M={\bf I}_Y$. Let us see that the
``projection'' $p(y,x)=y-L(x)$ is bounded if and only if $L$ is at
finite distance from $F$. Indeed, if $p$ is bounded, we have
$$
\|y-L(x)\|_Y=\|p(y,x)\|_Y\leq \|p\|\|(y,x)\|_F=\|p\|(\|y-F(x)\|_Y+\|x\|_X),
$$
and taking $y=F(x)$ we see that $\|F-L\|\leq \|p\|$. The converse is also easy:
\begin{align*}
\|p(y,x)\|_Y=\|y-L(x)\|_Y&\leq \Delta_Y(\|y-F(x)\|_Y+\|F(x)-L(x)\|_Y)\\
&\leq  \Delta_Y(\|y-F(x)\|_Y+\|F-L\|\|x\|_X)
\end{align*}
and so $\|p\|\leq \Delta_Y\max(1,\|F-L\|)$.

Thus quasilinear maps give rise to twisted sums (exact sequences). The converse is also true. Indeed if we are given a pair of quasi-Banach spaces $X$ and $Y$ and a third space $Z$ containing $Y$ in such a way that $Z/Y=X$, then we can define a quasilinear map from $X$ to $Y$ as follows. First, we take a homogeneous bounded section of the quotient map $\pi:Z\to Y$, that is, a homogeneous map $B:X\to Z$ such that if $z=B(x)$, then $\pi(z)=x$, with $\|z\|_Z\leq 2\|x\|_X$ (note that $\|x\|_X=\inf_{\pi(z)=x}\|z\|_Z$). Of course $B$ will be not linear nor continuous in general. Then we may take a linear (but probably unbounded) section of $\pi$. Finally, we put $F=B-L$. Note that $F$ takes values in $Y$; moreover it is really easy to check that $F$ is quasilinear since for $x,y\in X$ one has
$$
F(x+y)-F(x)-F(y)= B(x+y)-B(x)-B(y).
$$
It turns out that $Z$ is linearly isomorphic to the twisted sum $Y\oplus_F X$ through the map $u:Z\to Y\oplus_F X$ given by $u(z)=(z-L(\Pi(z)), \pi(z))$ and so $F$ is trivial if and only if $Y$ is complemented in $Z$, in which case $Z$ is isomorphic to $Y\oplus X$.

\subsection{Extensions}
Let $X$ and $Y$ be quasi-Banach spaces.
An extension of $X$ by $Y$ is a short exact sequence of quasi-Banach spaces and operators
\begin{equation}\label{SEX}
\begin{CD}
0 @>>>Y @>\imath>>Z@>\pi>> X @>>> 0.
\end{CD}
\end{equation}
 As $\imath(Y)=\ker\pi$ is closed in $Z$ the operator $\imath$ embeds $Y$ as a closed subspace of $Z$ and $X$ is
isomorphic to the quotient $Z/\imath(Y)$, by the open mapping
theorem. The extension $0
\to Y \to Z' \to X\to 0$ is said to be equivalent to (\ref{SEX})
if there exists an operator $t: Z \to Z'$ making commutative the
diagram
\begin{equation}\label{equiv}
\begin{CD}
0  @>>>  Y @>>> Z @>>> X @>>>0 \\
  &  &   @|  @VVtV  @|
   & \\
0  @>>>  Y @>>> Z_1 @>>> X @>>>0. \\
\end{CD}
\end{equation}
The five-lemma \cite[Lemma~3.3]{maclane} and the open mapping theorem imply that such a $t$ is necessarily an isomorphism and so ``being equivalent'' is a true equivalence relation. The extension (\ref{SEX}) is
said to be trivial if it is equivalent to the direct sum $0\to Y\to Y\oplus X\to X\to 0$. This happens if and only if it
splits, that is, there is an operator $p:Z\to Y$ such that
$p\circ\imath={\bf I}_Y$ (i.e., $\imath(Y)$ is complemented in $Z$); equivalently,
there is an operator $s:X\to Z$ such that $\pi \circ s= {\bf I}_X$.

For every pair of quasi-Banach spaces $X$ and $Y$, we denote by
$\Ext(X,Y)$ the space of all exact sequences (\ref{SEX}) modulo
equivalence. The set $\Ext(X,Y)$ can be given a linear structure
in such a way that the class of trivial sequences corresponds to
zero; see \cite[Appendix~7]{cabecastlong}. Thus, $\Ext(X,Y)=0$
means that every extension of the form (\ref{SEX}) is trivial. By
\cite[Proposition 3.3]{kalton78}) $K(X,Y)$ is finite if and only
if $\Ext(X,Y)=0$.

There is a correspondence between extensions and quasilinear maps
(sketched in Section~\ref{ts} and considered in full detail in
\cite{kaltpeck,castgonz} and \cite[Section~2]{cabecastlong}) which
takes \emph{trivial} quasilinear maps into  \emph{trivial}
extensions. Moreover, two quasilinear maps induce equivalent
extensions if and only if its difference is trivial in which case
we will declare it as ``equivalent''. From now on we will denote
by $\mathcal Q(X,Y)$ the space of quasilinear maps from $X$ to $Y$
modulo equivalence.

\section{A new construction of $\Ext$}

Two  ``representations'' of the spaces $\Ext(X,Y)$ are available so far, namely, the very definition through short exact sequences and the identification with $\mathcal Q(X,Y)$ provided by the construction that appears in Section~\ref{ts}.
To get the results of this paper  we will
need a new one. To explain it, observe that when one works in
Banach spaces things are somewhat simpler since there exist
projective objects ($\ell_1(I)$ for every set of indices $I$) and injective objects
($L_\infty(\mu)$-spaces, for instance). In the category {\bf
Q} of quasi-Banach spaces each projective object is
finite-dimensional. This easily follows from the fact, proved by
Stiles in \cite{stiles}, that every infinite-dimensional
complemented subspace of $\ell_p$ is isomorphic to $\ell_p$ when
$0<p<1$. (See also \cite[Chapter 2, Section 3]{kpr}.) This could
be a problem for the construction of $\Ext$ via projective
objects. However, there exist spaces that act as projective, at
least for a given couple of quasi-Banach spaces. We base our
approach on two facts: the first one is Aoki-Rolewicz theorem that states that every quasi-Banach space is $p$-normable for some $p\in(0,1]$; see \cite[Theorem 3.2.1]{r} or
\cite[Theorem 1.3]{kpr}.

The second fact, proved by Kalton in \cite[Theorem
3.5]{kalton78} is that $\Ext(\ell_q(I), Y)=0$ if $0<q<1$ and $Y$ is
$p$-normable for some $p>q$. A `formal' consequence is that any
twisted sum of two $p$-normable spaces is $q$-normable for every
$0<q<p$ (see \cite[Theorem 4.1]{kalton78} for a more general
result). Now, let $X$ and $Y$ be fixed quasi-Banach spaces, both
$p$-normable. Take $0<q<p$ and let $I$ be a set of indices having
cardinality $\dens(X)$ so that we can construct a quotient map
$\varpi: \ell_q(I)\to X$. Write $K=\ker \varpi$ and consider the
extension
\begin{equation}\label{pre}
0 \longrightarrow K \stackrel \varkappa \longrightarrow \ell_q(I)  \stackrel \varpi \longrightarrow X \longrightarrow 0.
\end{equation}
Applying $\mathfrak L(-,Y)$ we get the exact sequence (see Theorem
1 in \cite{cabecastlong})
$$
0 \longrightarrow \mathfrak L(X, Y) \stackrel {\varpi^*}
\longrightarrow \mathfrak  L(\ell_q(I), Y) \stackrel {\varkappa^*}
\longrightarrow \mathfrak  L(K, Y) \stackrel  \omega
\longrightarrow \Ext(X, Y) \longrightarrow 0
$$
since $\Ext(\ell_q(I), Y)=0$. The exactness of the preceding
sequence encodes the following information:

\begin{itemize}
\item Each operator $u:K\to Y$ gives rise to an extension of $X$
by $Y$, namely the lower row in the commutative diagram:
$$
\begin{CD}
0@>>> K@>>> \ell_q(I) @>>> X @>>>0\\
& & @V u VV @VVV @|\\
0@>>> Y @>>> P @>>> X @>>>0.
\end{CD}
$$(see Section 7 in \cite{cabecastlong})

\item Two operators $u,v\in \mathfrak
L(K,Y)$ give equivalent extensions if and only if $u-v$
extends to an operator $\mathfrak L(\ell_q(I), Y)$.

\item All extensions $0\to Y\to Z\to X\to 0$ arise in this
way, up to equivalence.
\end{itemize}

The third point is as follows: if $0\to Y\to Z\to X\to 0$ is exact, then $Z$ is
$q$-normable and the quotient map $\varpi:\ell_q(I)\to X$ can be
lifted to an operator $u: \ell_q(I)\to Z$. The restriction of $u$
to $K$ takes values in $Y$ and therefore one gets a commutative
diagram
$$
\begin{CD}
0@>>> K @>>> \ell_q(I) @>>> X @>>>0\\
& & @V u VV @VVV @|\\
0@>>> Y @>>> Z @>>> X @>>>0.
\end{CD}
$$
Accordingly we can define a linear space taking
$$
\Ext^\varpi(X,Y)= \frac{\mathfrak L(K, Y)}{\varpi^* (\mathfrak
L(\ell_q(I), Y))}.
$$

The algebraic part of the equivalence between the three
representations: $\Ext$ (equivalence classes of exact sequences),
 $\mathcal Q$ (equivalence classes of quasilinear
maps) and
$\Ext^\varpi$ are more or less straightforward and, as soon as one
gets some acquaintance with the functor and natural transformation
language \cite[Chapter I, Section~8]{maclane} can be formulated as:

\begin{proposition} The functors
$\mathcal Q, \Ext_{\bf Q}$ and $\Ext^\varpi$ are naturally
equivalent acting from the category $\bf Q\times\bf Q^{op}$ to the
category {\bf V} of vector spaces.\hfill$\square$
\end{proposition}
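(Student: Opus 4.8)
The plan is to prove the result by routing everything through the middle functor $\Ext$: I will exhibit two natural isomorphisms $\Phi\colon\mathcal Q\Rightarrow\Ext$ and $\Psi\colon\Ext^\varpi\Rightarrow\Ext$, after which the natural equivalence of all three functors follows by composing $\Phi$ with $\Psi^{-1}$. A morphism $(X,Y)\to(X',Y')$ in the domain category amounts to a pair $(\alpha,\beta)$ with $\alpha\colon X'\to X$ and $\beta\colon Y\to Y'$, and on $\Ext$ it acts by pullback along $\alpha$ and pushout along $\beta$; I will describe the matching actions on $\mathcal Q$ and $\Ext^\varpi$ as they are needed.

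First I would settle the object level. For $\Phi$ I take $\Phi_{X,Y}[F]$ to be the class of the sequence (\ref{yfx}) attached to $F$ in Section~\ref{ts}. That section already shows this assignment is well defined (trivial maps go to trivial sequences, and two quasilinear maps agree iff their difference is trivial) and that every extension arises this way up to equivalence, so $\Phi_{X,Y}$ is a bijection; linearity amounts to matching the pointwise sum on $\mathcal Q(X,Y)$ with the Baer sum on $\Ext(X,Y)$, which is the computation of \cite[Appendix~7]{cabecastlong}. For $\Psi$ I use the connecting map $\omega$ of the displayed exact sequence: by exactness $\omega$ is onto and $\ker\omega=\varkappa^*(\mathfrak L(\ell_q(I),Y))$, so $\omega$ factors through an isomorphism $\bar\omega\colon\Ext^\varpi(X,Y)=\coker\varkappa^*\to\Ext(X,Y)$, linear because the whole sequence is one of vector spaces. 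I set $\Psi_{X,Y}=\bar\omega$.

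Next comes naturality. For $\Phi$ this is a direct twisted-sum computation: the pushout of $Y\oplus_F X$ along $\beta$ is equivalent to $Y'\oplus_{\beta F}X$ and the pullback along $\alpha$ is equivalent to $Y\oplus_{F\alpha}X'$, so once the action of $(\alpha,\beta)$ on $\mathcal Q$ is declared to be $[F]\mapsto[\beta F\alpha]$ (and one checks $\beta F\alpha$ is again quasilinear, which is immediate), the naturality squares commute. For $\Psi$, naturality in the $Y$-variable is immediate, since $\omega$ is the connecting homomorphism of the sequence obtained by applying $\mathfrak L(-,Y)$ to the \emph{fixed} presentation (\ref{pre}), and this sequence is natural in $Y$ by functoriality of pushout.

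The genuine difficulty, and the step I would treat most carefully, is naturality of $\Ext^\varpi$ in the $X$-variable, because the very definition of $\Ext^\varpi(X,Y)$ rests on a \emph{choice} of presentation $\varpi\colon\ell_q(I)\to X$. Given $\alpha\colon X'\to X$ with presentation $\varpi'\colon\ell_{q'}(I')\to X'$, I must produce a comparison map $a\colon\ell_{q'}(I')\to\ell_q(I)$ lifting $\alpha\varpi'$ through $\varpi$. Such a lift exists because the obstruction lives in $\Ext(\ell_{q'}(I'),K)$, and $K\subseteq\ell_q(I)$ is $q$-normable, so choosing the exponents with $q'<q$ makes this group vanish by Kalton's theorem \cite[Theorem~3.5]{kalton78}. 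Restricting $a$ to kernels gives a map $K'\to K$, hence a map $\mathfrak L(K,Y)\to\mathfrak L(K',Y)$ descending to $\Ext^\varpi$; two lifts of $\alpha\varpi'$ differ by a map $\ell_{q'}(I')\to K$, whose contribution is annihilated on passing to $\coker\varkappa^*$, so the induced map is independent of all choices. This is exactly the comparison theorem for resolutions, and the same argument applied to $\alpha=\mathbf{I}_X$ with two different presentations shows that $\Ext^\varpi(X,Y)$ is canonically independent of $\varpi$, legitimising the notation. Finally, checking that $\bar\omega$ intertwines these induced maps with pullback on $\Ext$ reduces to the commutativity of the two pushout diagrams associated with $u\in\mathfrak L(K,Y)$ and $u\circ(a|_{K'})\in\mathfrak L(K',Y)$, which is routine diagram chasing.
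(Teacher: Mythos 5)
Your proposal is correct, but it takes a genuinely different route from the paper --- which in fact offers no separate proof of this proposition at all: the algebraic equivalences are declared ``more or less straightforward,'' and the identification actually used later, in the proof of Theorem~\ref{main}, is the direct transformation $\eta_{(X,Y)}[u]=[u\circ\Phi]$ from $\Ext^\varpi$ to $\mathcal Q$, whose bijectivity is verified by hand through explicit liftings $U:\ell_q(I)\to Y\oplus_F X$ and twisted-sum diagrams. You instead route both outer functors through $\Ext$: the Kalton--Ribe correspondence gives $\Phi\colon\mathcal Q\Rightarrow\Ext$, and the connecting map $\omega$ of the long homology sequence factors into an isomorphism $\bar\omega\colon\coker\varkappa^*\to\Ext(X,Y)$, so the object-level bijection comes for free from exactness rather than from explicit liftings. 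The two identifications agree: since $\omega(u)$ is the pushout of the presentation along $u$ and the quasilinear map associated to that pushout is $u\circ\Phi$, your composite $\Phi^{-1}\circ\bar\omega$ is precisely the paper's $\eta$. What your version genuinely adds is the comparison-of-resolutions argument: the paper fixes one presentation per pair and never addresses why $\Ext^\varpi$ is independent of the choice of $(q,I,\varpi)$, nor how it acts on morphisms $\alpha\colon X'\to X$, even though the functoriality claim requires both; your bookkeeping (a lift $a\colon\ell_{q'}(I')\to\ell_q(I)$ exists when $q'\le q$ because the obstruction lies in $\Ext(\ell_{q'}(I'),K)=0$ by Kalton's theorem, $K$ being $q$-normable, and two lifts differ by a map into $K$ whose contribution is killed in the cokernel) supplies exactly what is missing. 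One point you should make explicit: admissibility of a presentation also depends on $Y$ (one needs $q<p$ with $Y$ $p$-normable for the four-term sequence to be exact, hence for $\bar\omega$ to exist), and when two presentations of the same $X$ carry different exponents the comparison map may exist in only one direction; it is then your closing observation --- that $\bar\omega$ intertwines the induced maps --- which forces the one-sided comparison to be an isomorphism, so that step is load-bearing rather than ``routine diagram chasing.''
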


\section{Natural equivalences for $\Ext$}

More interesting, and essential for our purposes, is to consider
what occurs when one endows the spaces with natural linear
topologies. The space $\mathcal Q(X,Y)$ can be equipped (see
Section 3 of \cite{cabecastlong}) with the semi-quasi-norm
$$
Q([F])=\inf\{Q(G): G \text{ is equivalent to } F\}.
$$
It was proved in \cite[Theorem~4]{cabecastlong} that this makes $\mathcal
Q(X,Y)$ complete, but rarely Hausdorff.  The corresponding
topology in $\Ext(X,Y)$ can be obtained as follows. Given an
extension
\begin{equation}\label{given}
0 \longrightarrow Y \stackrel \imath \longrightarrow Z  \stackrel \pi \longrightarrow X \longrightarrow 0,
\end{equation}
we put
$$
\varrho(\imath,\pi)=\inf_B\sup_{x,y\in X}\frac{\|\imath^{-1}(B(x+y)-Bx-By)\|_Y}{\|x\|_X+\|y\|_X},
$$
where $B$ runs over all homogeneous bounded sections $B:X\to Z$
of $\pi$. Then set
$$
\varrho([(\imath,\pi)])=\inf_{(\imath',\pi')} \varrho(\imath',\pi'),
$$
where the infimum is taken over those $(\imath',\pi')$ for which
$$
0 \longrightarrow Y \stackrel {\imath'} \longrightarrow Z'  \stackrel {\pi'} \longrightarrow X \longrightarrow 0
$$
is equivalent to (\ref{given}). But the representation we need for
$\Ext$ is  $\Ext^\varpi$. Let us introduce a semi-quasi-norm on
$\Ext^\varpi(Z,Y)$ as:
$$
|[u]|_\varpi=\inf\left\{\|u-\varkappa^*(v)\|_{\mathfrak L(K,Y)}:
v\in \mathfrak  L(\ell_q(I),Y)\right\}
$$
The image of $\varkappa^*$ is not (as a rule) closed in $\mathfrak
 L(K, Y)$ and thus the topology induced by $ |\cdot|_\varpi$ is not
necessarily Hausdorff, which matches with the previous
constructions. One has

\begin{theorem}
\label{main} $\Ext, \mathcal Q$ and $\Ext^\varpi$ are naturally
equivalent functors acting from the category $\bf Q\times\bf
Q^{op}$ to the category $\bf Q_{1/2}$ of semi quasi-Banach spaces.
\end{theorem}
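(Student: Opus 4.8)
The plan is to upgrade the already-established algebraic natural equivalence (the unnumbered Proposition) to a \emph{topological} one. The previous proposition gives, for each pair $(X,Y)$, vector-space isomorphisms $\mathcal Q(X,Y)\cong\Ext(X,Y)\cong\Ext^\vp(X,Y)$ that are natural in both variables. Since each of these three spaces already carries the semi-quasi-norm described above ($Q([\cdot])$ on $\mathcal Q$, $\varrho([\cdot])$ on $\Ext$, and $|\cdot|_\vp$ on $\Ext^\vp$), what remains is precisely to verify that these isomorphisms are \emph{topological}, i.e.\ bounded with bounded inverse. Naturality in the functorial sense is then automatic from the algebraic proposition, so the whole content of Theorem~\ref{main} is the mutual boundedness of the comparison maps together with the verification that each semi-quasi-norm genuinely makes the corresponding space a \emph{semi quasi-Banach} space (completeness), so that the target category is $\bf Q_{1/2}$ and not merely a category of semi-quasi-normed spaces.

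First I would fix $0<q<p$ and the relatively projective presentation $0\to K\stackrel\vk\to\ell_q(I)\stackrel\vp\to X\to 0$ of the earlier section, and set up explicit comparison maps at the level of representatives. Going from $\Ext^\vp$ to $\mathcal Q$: given $u\in\mathfrak L(K,Y)$, I would push out $\vk$ along $u$ to build the lower extension, then extract a quasilinear map $F_u$ by the Kalton--Ribe recipe of Section~\ref{ts} (a bounded homogeneous section minus a linear section). The key estimate is that $Q([F_u])\le C\,|[u]|_\vp$ for a constant $C$ depending only on $\Delta_Y$ and the constants implicit in the $p$- and $q$-normability (Aoki--Rolewicz). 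In the reverse direction, given a quasilinear $F:X\to Y$ I would lift $\vp$ through the quotient map $\pi:Y\oplus_F X\to X$ to an operator $\ell_q(I)\to Y\oplus_F X$ and restrict to $K$, producing $u_F\in\mathfrak L(K,Y)$ with $|[u_F]|_\vp\le C'Q([F])$; here the lifting exists precisely because $\Ext(\ell_q(I),Y)=0$, which was the second fact quoted from Kalton. The analogous pair of inequalities between $\mathcal Q$ and $\Ext$ via $\varrho$ is the most transparent, since for the extension $0\to Y\to Y\oplus_F X\to X\to 0$ the two defining infima over bounded homogeneous sections $B$ coincide up to the modulus of concavity $\Delta_Y$, essentially by the computation already displayed in Section~\ref{ts} relating $\|p\|$ to $\|F-L\|$.

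I would then assemble these one-sided bounds into genuine topological isomorphisms and invoke the algebraic proposition to get naturality for free. Completeness of $\mathcal Q(X,Y)$ under $Q([\cdot])$ is already cited from \cite[Theorem~4]{cabecastlong}; transporting it along the bounded-with-bounded-inverse comparison maps yields completeness of $\Ext(X,Y)$ and $\Ext^\vp(X,Y)$ as well, placing all three in $\bf Q_{1/2}$. The hardest part will be the bound $Q([F_u])\le C\,|[u]|_\vp$: one must control the quasilinearity constant of the extracted map $F_u$ \emph{uniformly} over the choice of bounded homogeneous section and of linear section used in the pushout, and the passage from $\mathfrak L$-norms on $K$ to the $(\|x\|+\|y\|)$-denominator in \eqref{q} hides the $q$-normability constant of $\ell_q(I)$ and the $p$-normability of $Y$. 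Tracking how these Aoki--Rolewicz constants propagate through the pushout, and checking that they do not depend on the representative $v\in\mathfrak L(\ell_q(I),Y)$ subtracted when computing $|[u]|_\vp$, is where the genuine estimation lives; everything else reduces to diagram chasing and the elementary section computations already recorded in the preliminaries.
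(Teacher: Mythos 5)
There is a genuine gap, and it sits exactly where you placed the least weight. You declare that ``the hardest part will be the bound $Q([F_u])\le C\,|[u]|_\vp$''; in fact that direction is essentially free once one uses the single quasilinear map $\Phi=B_0-L_0$ attached once and for all to the presentation (\ref{pre}): the comparison map is simply $\eta[u]=[u\circ\Phi]$, and $Q(u\circ\Phi)\le\|u\|\,Q(\Phi)$, so taking the infimum over the representatives $u-\varkappa^*(v)$ and using linearity of $\eta$ gives $Q[\eta([u])]\le Q(\Phi)\,|[u]|_\vp$ with no pushout, no choice of sections, and no dependence on $v$ --- your worries about uniformity over sections and representatives simply evaporate in the paper's setup. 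Conversely, the step you treat as routine --- producing $u_F$ with $|[u_F]|_\vp\le C'\,Q([F])$ --- is the actual analytic content of Theorem~\ref{main}, and your justification (``the lifting exists precisely because $\Ext(\ell_q(I),Y)=0$'') yields only the \emph{existence} of some lifting $U:\ell_q(I)\to Y\oplus_F X$, with no control whatsoever on $\|U\|$, hence no bound on $\|u_F\|$ uniform over $F$ with $Q(F)\le 1$. Without such a uniform bound, $\eta^{-1}$ is merely an algebraic inverse and the claimed topological (hence semi-quasi-Banach--valued functorial) equivalence is unproven.

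The paper's proof devotes its entire second half to closing precisely this hole: it bounds the modulus of concavity of $Y\oplus_F X$ by $\max\{\Delta_Y^2,\Delta_Y Q(F)+\Delta_X\}$, establishes the amalgamation estimate $\bigl\|F\bigl(\sum_{i}x_i\bigr)-\sum_{i}F(x_i)\bigr\|\le M\,Q(F)\bigl(\sum_i\|x_i\|^q\bigr)^{1/q}$ with $M=M(p,q)$ for quasilinear maps between $p$-normed spaces, and then exhibits the explicit lifting $U\bigl(\sum_i\lambda_ie_i\bigr)=\sum_i\lambda_i(F\varpi e_i,\varpi e_i)$, whose convergence uses $q$-normability and completeness of $Y\oplus_F X$ and whose norm is at most $(M+1)\max(\Delta_Y^2,\Delta_Y+\Delta_X)$ whenever $Q(F)\le 1$. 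If you prefer to avoid the explicit construction, a legitimate repair is to quote the quantitative form of Kalton's vanishing theorem already cited in the paper (\cite[Proposition 3.3]{kalton78}: $\Ext(\ell_q(I),Y)=0$ if and only if $K(\ell_q(I),Y)<\infty$), apply it to the quasilinear map $F\circ\varpi$, which satisfies $Q(F\circ\varpi)\le Q(F)$ since $\|\varpi\|\le 1$, obtain a linear $g:\ell_q(I)\to Y$ with $\|F\circ\varpi-g\|\le K^+Q(F)$, and set $U(z)=(g(z),\varpi(z))$, so that $\|U\|\le 1+K^+Q(F)$; but some quantitative input of this kind must be supplied, and your proposal supplies none. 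The rest of your outline (transporting completeness from $\mathcal Q(X,Y)$ via \cite[Theorem~4]{cabecastlong}, naturality inherited from the algebraic proposition, and the $\Ext$--$\mathcal Q$ comparison up to $\Delta_Y$) matches the paper and is fine.
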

\begin{proof} That $\Ext$ and $\mathcal Q$ still are naturally equivalent when
regarded as functors $\bf Q\times\bf Q\rightsquigarrow \bf \bf
Q_{1/2}$ is nearly obvious. So let us concentrate on the natural
equivalence between $\mathcal Q$ and $\Ext^\varpi$. Given
quasi-Banach spaces $X$ and $Y$ we fix a (relatively projective)
presentation for $X$, as in (\ref{pre}). Let $B_0$ and $L_0$
represent, respectively, a homogeneous bounded and a linear
selection for $\varpi$ so that $\Phi=B_0-L_0$ is a quasilinear
map associated to (\ref{pre}). We define a linear map from
$\frak L(K,Y)$ to the space of quasilinear maps $Z\to Y$ by
$\Phi^*(u)=u\circ\Phi$. We have $Q(u\circ\Phi)\leq \|u\|Q(\Phi)$.
Suppose $u$ extends to an operator $v: \ell_q(I) \to Y$. Then the
identity
$$u\circ\Phi = v\circ B_0-v\circ L_0$$
shows that $[u\circ\Phi]=0$ in $\mathcal Q(X,Y)$. Thus we can
define $\eta$ at the couple $(X,Y)$ by
$$\eta_{(X,Y)} [u] = [u\circ\Phi]\quad\quad(u\in L(K,Y))$$
From now on, assume that $Y$ and $Z$ have been fixed so that we
can omit the subindex and just write $\eta$. Quite clearly
$$
Q[\eta([u])]\leq Q(\Phi)|[u]|_\varpi,
$$
and so $\eta:(\Ext^\varpi(X,Y),|\cdot|_\varpi)\to(\mathcal
Q(X,Y),Q[\cdot])$ is (linear and) continuous. From now on we will
identify $\ell_q(I)=K\oplus_\Phi X$ through the isomorphism
$z\longmapsto(z+L_0(\vp(z)), \vp(z))$ without further mention. Let
us check the injectivity of $\eta$. Given $u\in L(K,Y)$ the
following diagram is commutative:
$$
\begin{CD}
0@>>> K@>>> K\oplus_\Phi X @>\varpi >> X @>>>0\\
& & @V u VV @V U VV @|\\
0@>>> Y @>>> Y\!\oplus_{\Phi^*(u)}\!X @>\pi >> X @>>>0
\end{CD}
$$
where $U(k,x)=(u(k),x)$. If  $u\circ\Phi$ is trivial, composing $U$
with a bounded projection of $Y\oplus_{\Phi^*(u)} Z $ onto $Y$ we get an
extension of $u$ to $\ell_q(I)$ and so $[u]=0$ in $\Ext^\vp(X,Y)$.
Next we show $\eta$ is onto. If $F: X\to Y$ is quasilinear,
consider the induced exact sequence $0\to Y\to Y\oplus_F X\to X\to
0$. Let $U:\ell_q(I)\to Y\oplus_F X$ be any operator lifting the
quotient $Y\oplus_F X\to X$. Clearly $u$, the restriction of $U$
to $K$, takes values in $Y$ and we have a commutative diagram
\begin{equation}\label{dia}
\begin{CD}
0@>>> K@>>> \ell_q(I) @>\varpi >> X @>>>0\\
& & @V u VV @V U VV @|\\
0@>>> Y @>>> Y\oplus_F X @>\pi >> X @>>>0
\end{CD}
\end{equation}
which shows that $\eta([u])=[F]$. Should you need some more explanations, please identify $\ell_q(I)=K\oplus_\Phi X$ as indicated before. After that, $U$ has the form $U(k,x)=(u(k)+\ell(x),x)$, where $u\in L(K,Y)$ and $\ell:X\to Y$ is a linear (but maybe discontinuous) map. Anyway $U$ is bounded, so
$$
\|u(k)+\ell(x)-F(x)\|_Y+\|x\|_X\leq \|U\| (\|k-\Phi(x)\|_K+\|x\|_X).
$$
Taking $k=\Phi(x)$ we have
$$
\|u(\Phi(x))+\ell(x)-F(x)\|_Y+\|x\|_X\leq \|U\| \|x\|_X,
$$
hence $\|F-(u\circ \Phi+ \ell)\|\leq \|U\|-1$ and in particular
$[F]=[u\circ \Phi]=\eta[u]$.  We have thus seen that $\eta$ is a
continuous linear bijection. The preceding argument shows a bit
more: given a quasilinear $F:X\to Y$, $\eta^{-1}([F])$ is the
(class in $\Ext^\vp(X,Y)$ of) the restriction to $K$ of any
operator $U:\ell_q(I)\to  Y\oplus_F X$ lifting the quotient
$Y\oplus_F X\to X$. As $\eta^{-1}:\mathcal Q(X,Y)\to
\Ext^\vp(X,Y)$ is linear ($\eta$ is), continuity will follow if we
show that there is a constant $C$ such that, given $F:X\to Y$ with
$Q(F)\leq 1$, there is a lifting $U:\ell_q(I)\to  Y\oplus_F X$
with $\|U\|\leq C$. We will prove that the obvious lifting does
the work ---see (\ref{U}) below.

To this end let us remark that, if $(z_i)$ converges to $z$ in the
quasi-normed space $Z$, then $\|z\|\leq \Delta_Z \limsup_i\|z_i\|$,
where $\Delta_Z$ is the modulus of concavity of $Z$.

Next, if $F:X\to Y$ is quasilinear,
then the modulus of concavity of $Y\oplus_F
X$ is at most $\max\{\Delta_Y^2, \Delta_Y Q(F)+\Delta_X\}$.
Indeed, one has
\begin{align*}
\|(y&+y',x+x')\|_F=\|y+y'-F(x+x')\| + \|x+x'\|\\
&\leq
\Delta_Y(\|y+y'-F(x)-F(x')\|+\|F(x)+F(x')-F(x+x')\|)+\|x+x'\|\\
&\leq
\Delta_Y(\Delta_Y(\|y-F(x)\|+\|y'-F(x')\|)+Q(F)(\|x\|+\|x'\|)+ \Delta_X(\|x\|+\|x'\|)\\&
\leq \max\{\Delta_Y^2, \Delta_Y Q(F)+\Delta_X\}(\|(y,x)\|_F+\|(y',x')\|_F).
\end{align*}

We have mentioned that extensions of $p$-normable spaces are
$q$-normable for $0<q<p$. On the other hand a quasilinear map
$F$ gives rise to a $q$-normable extension if and only if it
satisfies the estimate
\begin{equation*}
\left\|F\left(\sum_{i=1}^n x_i\right)-\sum_{i=1}^n F(x_i)\right\|\leq M \left(\sum_{i=1}^n\|x_i\|^q\right)^{1/q}
\end{equation*}
for some $M$ and all $n$, with $x_i$ in the domain of $F$.
An obvious ``amalgamation'' argument shows the existence of a constant $M=M(p,q)$ such that, if $F$ is a quasilinear map acting between $p$-normed spaces, $0<q<p$, and $x_1,\dots,x_n$ belong to the domain of $F$, then
$$
\left\|F\left(\sum_{i=1}^n x_i\right)- \sum_{i=1}^n F(x_i)\right\|\leq M Q(F)\left(\sum_{i=1}^n\|x_i\|^q\right)^{1/q}.
$$

After that, suppose we are given a quasilinear map $F: X\to Y$. We define a lifting
$U:\ell_q(I)\to Y\oplus_F X$ (of the) quotient map $\varpi:\ell_q(I)\to X$) through the formula
\begin{equation}\label{U}
U\left(\sum_{i\in I} \lambda_ie_i\right) =  \sum_{i\in I}
\lambda_i (F\varpi e_i, \varpi e_i).
\end{equation}
The summation of the right-hand side of (\ref{U}) is performed in the quasi-norm topology of $Y\oplus_F Z$. The series converges because $Y\oplus_F Z$ is $q$-normable and complete and
$$
\sum_i\|\lambda_i (F\varpi e_i, \varpi e_i)\|_F^q=\sum_i|\lambda_i|^q<\infty.
$$
Let us estimate $\|U\|$ assuming $Z$ and $Y$ are $p$-normed and $Q(F)\leq 1$.
\begin{align*}
\left\|U\left(\sum_i\lambda_ie_i\right)\right\|_F&= \left\| \sum_i
\lambda_i (F\varpi e_i, \varpi e_i)\right\|_F\\
&\leq\Delta\cdot \sup \left\{ \left\| \sum_{i\in J}
\lambda_i (F\varpi e_i, \varpi e_i)\right\|_F: J\text{ is a finite subset of }I \right\}\\
&= \Delta\cdot \sup_{J}\left\{\left\| \sum_{i\in J}
\lambda_i (F\varpi e_i)-F\left(\sum_{i\in J}\lambda_i\varpi e_i \right)\right\|_Y+ \left\| \sum_{i\in J}\lambda_i\varpi e_i \right\|_X  \right\}\\
&\leq  \Delta\cdot \sup_{J} \left\{  M \left( \sum_{i\in J} |\lambda_i|^q\right)^{1/q}       +      \left( \sum_{i\in J} |\lambda_i|^q\right)^{1/q}                  \right\}\\
&\leq \Delta (M+1)\left\|\sum_i \lambda_i e_i\right\|_q,
\end{align*}
where $\Delta\leq\max(\Delta_Y^2, \Delta_Y+\Delta_X)$ is the modulus of concavity of $Y\oplus_F X$.
To sum up, we have seen that if $Q(F)\leq 1$ then $
\|U\|\leq (M(p,q)+1) \max(\Delta_Y^2, \Delta_Y+\Delta_X) $ and the same bound holds for $u=U|_K$. As $\eta([u])=[F]$ and taking into account that $\eta$ is linear, in particular homogeneous, we have
$$
\eta^{-1}([F])\leq CQ([F]),
$$
 where $C=(M(p,q)+1) \max(\Delta_Y^2, \Delta_Y+\Delta_X)$.
This completes the proof.
\end{proof}

It is perhaps worth noticing that the correspondences $F\mapsto U$
and $F\mapsto u$ described in the preceding proof are not linear
(or even homogeneous): $F\mapsto u$ becomes linear only when one
passes to the quotient structures.

\section{Approximation constants and the Hausdorff character of $\Ext$}

\begin{theorem}\label{criterion}
Let $X$ and $Y$ be quasi-Banach spaces. Then $\Ext(X,Y)$ is
Hausdorff if and only if $K_0(X,Y)$ is finite.
\end{theorem}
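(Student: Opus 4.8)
The plan is to test the Hausdorff property on the concrete models provided by Theorem~\ref{main}: on $\mathcal{Q}(X,Y)$ for the easy implication and on $\Ext^\vp(X,Y)$ for the delicate one, both being topologically equivalent to $\Ext(X,Y)$. A semi quasi-normed space is Hausdorff exactly when its semi-quasi-norm is a genuine quasi-norm, so I must show that $Q([F])=0$ forces $[F]=0$ (i.e.\ $F$ trivial) if and only if $K_0(X,Y)<\infty$. The first observation is that, since $Q$ annihilates every linear map, the quotient seminorm simplifies to $Q([F])=\inf\{Q(F-B):B\text{ bounded homogeneous}\}$; thus $Q([F])=0$ means precisely that there are bounded $B_n$ with $Q(F-B_n)\to0$.

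For the implication $K_0<\infty\Rightarrow$ Hausdorff I would argue as follows. Given $F$ with $Q([F])=0$, pick bounded $B_n$ with $Q(F-B_n)\to0$. The differences $B_n-B_m$ are \emph{bounded} quasilinear maps with $Q(B_n-B_m)\to0$, so $K_0<\infty$ yields bounded operators $\ell_{n,m}$ with $\|B_n-B_m-\ell_{n,m}\|\le(K_0+1)Q(B_n-B_m)\to0$. Hence $(B_n)$ is Cauchy in the quotient $\mathcal B(X,Y)/\mathfrak L_b(X,Y)$ of bounded homogeneous maps modulo bounded operators, which is complete (as $Y$ is complete and $\mathfrak L_b$ is a closed subspace); let $B_\infty$ represent its limit, so $\|B_n-B_\infty-\ell_n\|\to0$ for suitable bounded $\ell_n$. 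Since subtracting a linear map does not change the quasilinearity defect, $Q(F-B_\infty)=Q(F-B_\infty-\ell_n)\le\Delta_Y(Q(F-B_n)+Q(B_n-B_\infty-\ell_n))\to0$; as the left-hand side is independent of $n$, it must vanish. Therefore $F-B_\infty$ has null defect, hence is linear, and $F=(\text{linear})+B_\infty$ is trivial.

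For the converse the difficulty is that the space of quasilinear maps modulo linear maps is \emph{not} complete (a map with small defect need be neither bounded nor close to a linear map, so pointwise limits break down), so the open mapping theorem cannot be invoked there directly. I would instead pass to the complete model $\Ext^\vp(X,Y)=\mathfrak L(K,Y)/\vk^*(\mathfrak L(\ell_q(I),Y))$: here $\Ext$ is Hausdorff iff $R:=\operatorname{im}\vk^*$ is closed in the genuine quasi-Banach space $\mathfrak L(K,Y)$. Because $\ker\vk^*=\operatorname{im}\vp^*$ is closed, the open mapping theorem gives that $R$ is closed iff the induced injection $\mathfrak L(\ell_q(I),Y)/\operatorname{im}\vp^*\to\mathfrak L(K,Y)$ is bounded below, i.e.\ there is $C$ with $\dist(v,\operatorname{im}\vp^*)\le C\|v|_K\|$ for every $v$. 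It then remains to identify this bounded-below constant with $K_0(X,Y)$.

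The main obstacle is precisely this last identification. Using the explicit lifting~(\ref{U}) and the estimate $\|U\|\le C_0$ for $Q(F)\le1$ from the proof of Theorem~\ref{main}, one matches a bounded quasilinear map $F$ with an operator $u=U|_K$, and one must show that ``$F$ lies within $C\,Q(F)$ of a linear map'' corresponds to ``$u$ lies within $C'\,Q(F)$ of $R$'', so that the approximation constant $K_0$ and the bounded-below constant of $\vk^*$ agree up to the structural constants $\Delta_X,\Delta_Y,M(p,q)$. The care needed here stems from the fact, emphasised after Theorem~\ref{main}, that $F\mapsto u$ is not linear (only the passage to classes is), so the comparison of distances must be performed at the level of the quotient norms rather than on individual representatives.
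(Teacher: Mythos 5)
Your forward implication ($K_0(X,Y)<\infty$ implies $\Ext(X,Y)$ Hausdorff) is correct and takes a genuinely different route from the paper: you stay in $\mathcal Q(X,Y)$ and run a Cauchy-sequence argument in the quotient of bounded homogeneous maps modulo bounded operators, whose completeness (the operators do form a closed subspace under uniform convergence) produces a bounded $B_\infty$ with $Q(F-B_\infty)=0$, so that $F-B_\infty$ is linear and $F$ is trivial. The paper never argues this way: it proves instead that $\vk^*$ is relatively open, by taking an extendable $u$ with $\|u\|\leq 1$, choosing a linear $\ell$ at finite distance from $u\circ\Phi$, applying $K_0$ to the bounded map $u\circ\Phi-\ell$ to get $\ell'$ with $\|u\circ\Phi-(\ell+\ell')\|\leq Q(\Phi)K_0^+$, and exhibiting the bounded extension $v(k,x)=u(k)-(\ell+\ell')(x)$. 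Your limit argument is sound and arguably more conceptual (it shows directly that the semi-quasi-norm on $\mathcal Q(X,Y)$ is a genuine quasi-norm); the paper's is uniform and constant-explicit.

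The converse, however, has a genuine gap. You correctly reduce Hausdorffness to the closed-range/bounded-below property of $\vk^*$ --- equivalent to the paper's condition that every extendable $u\in\mathfrak L(K,Y)$ admits an extension $\tilde u$ with $\|\tilde u\|\leq C\|u\|$ --- but you then declare the passage from this to finiteness of $K_0(X,Y)$ to be ``the main obstacle'' and never carry it out; that passage \emph{is} the substance of this direction. Moreover, your diagnosis of the difficulty (the nonlinearity of $F\mapsto u$, forcing comparisons at the level of quotient norms) points the wrong way: no linearity is needed, and the paper compares distances on explicit representatives. What is needed are two quantitative facts. First, a by-product of the proof of Theorem~\ref{main}: there is $N=N(X,Y,\Phi)$ such that every quasilinear $F:X\to Y$ admits $u\in\mathfrak L(K,Y)$ with $\|u\|\leq NQ(F)$ and a linear $\ell$ with $\|F-(u\circ\Phi+\ell)\|\leq NQ(F)$; your proposal never records the control $\|u\|\leq NQ(F)$ coming from the lifting (\ref{U}), and without it the matching of constants cannot start. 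Second, the elementary description of all linear extensions of $u$ to $K\oplus_\Phi X$ as $v(k,x)=u(k)-\ell'(x)$, together with the two-sided estimate $\|u\circ\Phi-\ell'\|\leq\|v\|\leq \Delta_Y(\|u\|+\|u\circ\Phi-\ell'\|)$. Granting these, the argument closes in three lines: given bounded quasilinear $B$ with $Q(B)\leq 1$, choose $u,\ell$ as above; since $B$ is bounded, $u\circ\Phi+\ell$ is bounded, so $u\circ\Phi$ is trivial and $u$ extends boundedly; the bounded-below property yields an extension $v$ with $\|v\|\leq CN$, hence $\|u\circ\Phi-\ell'\|\leq CN$ and $\|B-(\ell+\ell')\|\leq\Delta_Y N(1+C)$, giving $K_0(X,Y)\leq\Delta_Y N(1+C)$. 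As written, your proposal proves only one implication of the theorem.
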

\begin{proof} An operator $t:A\to B$ acting between quasi-Banach spaces has
closed range if and only if it is relatively open, that is, there
is a constant $C$ such that whenever $b=t(a)$ for some $a\in A$
there is $a'\in A$ such that $b=t(a')$ and $\|a'\|\leq C\|b\|$.

We will work with the space $\Ext^\varpi(X, Y)$ arising from a
relatively projective presentation of $X$, as in (\ref{pre}). As
we have already shown, this space and  $\Ext(X,Y)$ (or $\mathcal
Q(X,Y)$) are linearly isomorphic. Looking at the exact sequence
$$
\begin{CD}
0 @>>> \mathfrak L(X, Y) @>{\varpi^*}>> \mathfrak L(\ell_q(I), Y)
@>{\varkappa^*}>> \mathfrak L(K, Y) @>{\omega}>> \Ext^\varpi(X, Y)
@>>> 0
\end{CD}
$$we see that $\Ext^\varpi(Z,Y)$ is Hausdorff if and only if the
(restriction) map $\varkappa^*$ has closed range. By the preceding
remarks this is equivalent to $\varkappa^*$ being relatively open,
namely:
\begin{itemize}
\item[$\bullet$] There is a constant $C$ such that whenever $u:K\to Y$ can be
extended to  $\ell_q(I)$ there is an extension $\tilde u\in
\frak L(\ell_q(I), Y)$ satisfying $\|\tilde u\|\leq C\|u\|$.
\end{itemize}

We will show that this happens if and only if $K_0(Z,Y)$ is
finite. To this end let us make explicit the following fact,
obtained during the proof of Theorem~\ref{main}: there is a
constant $N=N(X,Y,\Phi)$ such that, for each quasilinear map $F:X\to
Y$, there exist $u\in \mathfrak L(K,Y)$, with $\|u\|\leq NQ(F)$,
and a linear map $\ell:X\to Y$ satisfying
$$
\|F - (u\circ\Phi+\ell) \|\leq NQ(F).
$$

On the other hand, identifying $\ell_q(I)$ with $K\oplus_\Phi X$,
we see that given $u\in \mathfrak L(K,Y)$, any (not necessarily
continuous) linear extension of $u$ to $K\oplus_\Phi X$ has the
form $v(k,x)= u(k)-\ell'(x)$, where $\ell':X\to Y$ is a linear
map. Since
$$
\|v\|=\sup\frac{\|u(k)-\ell'(x)\|_Y}{\|(k,x)\|_\Phi}= \sup\frac{\|u(k)-u(\Phi(x))+u(\Phi(x))-\ell'(x)\|_Y}{\|(k-\Phi(x)\|_K+\|x\|_X},
$$
it follows that
$$
\| u\circ\Phi - \ell' \| \leq \|v\|\leq \Delta_Y(\|u\|+
\| u\circ\Phi - \ell' \|).
$$
Now, suppose $K_0(X,Y)$ is finite and let $K_0^+$ be any number greater than $K_0(X,Y)$.

Suppose that $u\in
\mathfrak L(K,Y)$ admits an extension to $\ell_q(I)$. There is no
loss of generality in assuming $\|u\|\leq 1$, so that
$Q(u\circ\Phi)\leq Q(\Phi)$. Let $\ell:Z\to Y$ be a linear map at
finite distance from $u\circ\Phi$, so that $u\circ\Phi-\ell$ is
bounded. As $Q(u\circ\Phi-\ell)=Q(u\circ\Phi)\leq Q(\Phi)$ there
is $\ell'\in \mathfrak L(X,Y)$ such that $\| u\circ\Phi-\ell -
\ell'\| = \| u\circ\Phi - (\ell+\ell')\|\leq Q(\Phi)K_0^+$. The
operator $v(k,x)=u(k)-(\ell+\ell')(x)$ is then an extension of $u$
to $K\oplus_\Phi X$, with
$$
\|v\|\leq \Delta_Y(\|u\|+ \| u\circ\Phi -  (\ell+\ell')\|\leq
\Delta_Y(1+ Q(\Phi)K_0^+).
$$
Hence $\varkappa^*$ is relatively open, which proves the ``if''
part.

As for the converse, suppose $(\bullet)$ holds and let
$B:X\to Y$ be bounded, with $Q(B)\leq 1$. Choose $u\in \mathfrak
L(K,Y)$ and a linear $\ell:X\to Y$ such that $\|u\|\leq N$ and
$$
\| B - (u\circ\Phi+\ell)\| \leq N.
$$
The operator $u$ obviously extends to $K\oplus_\Phi X$ and the
hypothesis yields an extension $v\in \mathfrak L(K\oplus_\Phi
X,Y)$ with $\|v\|\leq C\|u\|\leq CN$. As mentioned before such
$v$ has the form $v(k,x)= u(k)-\ell'(x)$, where $\ell':X\to Y$ is
a linear map and
$$
\| u\circ\Phi - \ell' \| \leq \|v\|\leq CN.
$$
Hence
$$
\|B - (\ell+\ell')\| \leq \Delta_Y( \|B - (u\circ\Phi+\ell)\| + \|
u\circ\Phi +\ell - (\ell'+\ell)\| \leq \Delta_YN(1+C).
$$
This completes the proof.
\end{proof}

\section{Counterexamples}\label{count}

Let us recall that a Banach space is said to be weakly compactly
generated (WCG) if it contains a weakly compact subset whose
linear span is dense in the whole space. Obviously each separable
Banach space is WCG and so are all reflexive spaces, in particular
$\ell_p(I)$ for $1<p<\infty$ and every index set $I$. Also,
$c_0(I)$ is WCG for every $I$, while $\ell_1(I)$ is WCG if and
only if $I$ is countable. A good general reference is
\cite{zizler}, in particular Section 3.

A quasi-Banach space $X$ is called a $\mathcal K$-space if
$K(X,\K)<+\infty$; equivalently, if every quasilinear function
$X\to\K$ is trivial; or else, if every short exact sequence
$$
0\longrightarrow \K \longrightarrow  Z\longrightarrow  X\longrightarrow  0
$$
splits. Banach spaces having nontrivial type $p>1$ are $\mathcal
K$-spaces (cf. \cite[Theorem 2.6]{kalton78}) as well as all
$\mathcal L_\infty$-spaces and their quotients \cite[Theorem
6.5]{kr}. As for specific examples, if $I$ is any infinite set,
then $c_0(I)$ is a $\mathcal K$-space, and $\ell_p(I)$ is a
$\mathcal K$-space if and only if $p\neq 1$, where
$p\in(0,\infty]$ (cf. \cite[Theorem 3.5]{kalton78}).

\begin{example}\label{infty}
Let $X$ be a WCG Banach space.
\begin{enumerate}
\item[(a)]If $X$ is nonseparable, then $K(X,c_0)=\infty$.
\item[(b)] If $X$ is a $\mathcal K$-space, then
$K_0(X, c_0)<\infty$.
\end{enumerate}

Therefore, if $X$ is a nonseparable WCG Banach $\mathcal K$-space, then $K_0(X,c_0)$ is finite but $K(X,c_0)$ is not.
\end{example}

\begin{proof} (a) It is shown in  \cite[Theorem
3.4]{cgpy} that if $X$ is a nonseparable WCG Banach space then
there is a nontrivial twisted sum of $c_0$ and $X$. Hence (see
Section~\ref{ts}) nontrivial quasilinear maps $F:X\to c_0$ exist
and so $K(X, c_0)=\infty$ ---when $X$ is either $c_0(I)$ or
$\ell^2(I)$ for uncountable $I$ the result goes back to \cite{jl}.
For a remarkably simple proof based on Parovichenko's theorem see
the recent paper \cite{y}. Different constructions can also be found in \cite{mar}.

(b)
 We prove now that
$K_0(X,c_0)$ is finite when $X$ is a WCG Banach $\mathcal
K$-space. Let $F:X\to c_0$ be a homogeneous bounded map, with
$Q(F)\leq 1$. The idea is to form the twisted sum $c_0\oplus_F X$
and obtain a projection $p$ onto $c_0$ having small norm. (Recall
from Section~\ref{ts} that in this case there is a linear map
$L:X\to c_0$ such that $\|F-L\|\leq \|p\|$). It is a classical
result in Banach space theory that $c_0$ is complemented by a
projection of norm at most 2 in any WCG Banach space. This is
Rosenthal's improvement of Veech's proof of Sobczyk's theorem
\cite{veech}. Of course $c_0\oplus_F X$ is WCG since $F$ is
trivial and so it is isomorphic to $c_0\oplus X$. The problem here
is that even if $c_0\oplus_F X$ is  isomorphic to a Banach space,
the functional $\|\cdot\|_F$ is only a quasi-norm and we need a
true norm to control the norm of the projection. This is the point
where the hypothesis that $X$ is a $\mathcal K$-space enters.

The details are as follows. Let $K=K(X,\mathbb K)$ be the $\mathcal K$-space constant of $X$. Then, for every homogeneous $f:X\to\K$ with $Q(f)\leq 1$ there is a linear map $\ell:X\to\K$ such that $|f(x)-\ell(x)|\leq K\|x\|$ for all $x\in X$. So, for finitely many points of $X$ we have,
\begin{align*}
\left|f\left(\sum_{i=1}^nx_i\right)-\sum_{i=1}^nf(x_i)\right|&=\left|f\left(\sum_{i=1}^nx_i\right)-\ell\left(\sum_{i=1}^nx_i\right)+
\sum_{i=1}^n\ell(x_i)-\sum_{i=1}^nf(x_i)\right|\\
&\leq K\left(\left\| \sum_{i=1}^nx_i \right\|+  \sum_{i=1}^n\|x_i\|\right)\\
&\leq 2K\left(\sum_{i=1}^n\|x_i\|\right).
\end{align*}
It follows that if $F:X\to c_0$ is quasilinear, with $Q(F)\leq 1$, then
\begin{equation*}
\left\|F\left(\sum_{i=1}^n
x_i\right)-\sum_{i=1}^n F(x_i)\right\|\leq 2K\sum_{i=1}^n\|x_i\|.
\end{equation*}
Hence, for every $n\in\N$ and $x_i\in X$,
\begin{align*}
\left\|\sum_{i=1}^n(y_i,x_i)\right\|_F&\leq \left\|\sum_{i=1}^ny_i -\sum_{i=1}^nF(x_i)\right\|_Y+
\left\| \sum_{i=1}^nF(x_i)  - F\left(\sum_{i=1}^n
x_i\right) \right\|_Y+ \sum_{i=1}^n\|x_i\|\\
&\leq (2K+1) \sum_{i=1}^n\|(y_i,x_i)\|_F.
\end{align*}
We define a norm on $c_0\oplus_F X$ by letting
$$
|(y,x)|_{\co}= \inf\left\{  \sum_{i=1}^n     \|(y_i,x_i)\|_F    :
(y,x)=  \sum_{i=1}^n (y_i,x_i)\right\}.
$$
By the preceding inequality,
$$
|(y,x)|_{\co}\leq \|(y,x)\|_F\leq (2K+1)|(y,x)|_{\co}.
$$
To conclude, consider the natural inclusion map $\imath: c_0\to
(c_0\oplus_F X, |\cdot|_{\co})$ given by $\imath(y)=(y,0)$. We
have that $\imath(c_0)$ is a separable subspace of $(c_0\oplus_F
X, |\cdot|_{\co})$, a WCG Banach space. Since
$$
\|y\|_\infty=\|(y,0)\|_F\leq (2K+1)|(y,0)|_{\co}
$$
the map $(y,0)\mapsto y$ has norm at most $(2K+1)$ from $(\imath(c_0),|\cdot|_{\co})$ to $c_0$ and by Veech's result it extends to a ``projection'' $p$ of
$ (c_0\oplus_F X, |(\cdot,\cdot)|_{\co})$ onto $c_0$ having whose norm is at most $2(2K+1)$. It is clear that the norm of $p$ as an operator from the quasi-Banach space $c_0\oplus_F X$ with its original norm $\|(\cdot,\cdot)\|_F$ to $c_0$ is at most  $2(2K+1)$ too. Writing $p(y,x)=y-L(x)$ as in Section~\ref{ts} we conclude that $\|F-L\|\leq   2(2K+1)$.\end{proof}

\section{``Positive'' results}

In most cases, however, $K_0(X,Y)$ is finite (if and) only if $K(X,Y)$ is.
Let us recall a few basic facts about ultrapowers of
quasi-Banach space spaces. Given a quasi-Banach space $Y$, a set
of indices $I$ and an ultrafilter $\mathscr{U}$ on $I$ one
considers the space of bounded families
$
\ell_\infty(I,Y)
$
with the sup quasi-norm and the subspace $N_\mathscr U=
\{(y_i): \lim_i\|y_i\|=0 \text{ along }\mathscr U\}$. The
ultrapower of $Y$ with respect to $\mathscr U$, denoted
$Y_\mathscr U$, is defined as the quotient space $
\ell_\infty(I,Y)/N_\mathscr U$ with the quotient quasi-norm. Let
$q_\U$ be the quotient map. As long as the quasi-norm of $Y$ is
continuous the quasi-norm in the ultrapower can be computed with
the formula
$$
\|[(y_i)]_U\|= \lim_U \|y_i\|,
$$
where we set $[(y_i)]_U =q_\U((y_i))$. In this case the diagonal map $\delta$
sending $y\in Y$ to the class of the contant family $(y)$ in $Y_\U$ is an isometric embedding.

An ultrasummand is a quasi-Banach space which appears complemented
in all its ultrapowers through the diagonal embedding. For Banach
spaces this is equivalent to ``it is complemented in its bidual
(or in any other dual Banach space)".
Typical nonlocally convex ultrasummands are $\ell_p(I)$ and the Hardy classes $H_p$ for $p\in(0,1)$.

A quasi-Banach space $X$ has
the Bounded Approximation Property (BAP) if there is a uniformly
bounded net of finite rank operators converging pointwise to the
identity on $X$. Obviously such a $X$ has separating dual (meaning
that for every nonzero $x\in X$ there is $x^*\in X^*$ such that
$x^*(x)\neq 0$). The BAP is equivalent to the following statement:
there is a constant $\lambda$ such that, whenever $E$ is a
finite-dimensional subspace of $X$, there is a finite rank
operator $u_E$ such that $u_E(x)=x$ for every $x\in E$
and $\|u\|\leq \lambda$. Every quasi-Banach space with a Schauder basis has the BAP.

\begin{theorem}\label{bap} \;
\begin{itemize}
\item[(a)] Let $X$ be a quasi-Banach space with the BAP and $Y$
an ultrasummand. If $K_0(X,Y)$ is finite, then so is $K(X,Y)$.

\item[(b)] Suppose $X$ is a $\mathcal K$-space and $Y$ an ultrasummand
with the BAP.  If $K_0(X,Y)$ is finite, then so is $K(X,Y)$.
\end{itemize}
\end{theorem}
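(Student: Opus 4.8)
The plan is to handle both parts with a single device: given a quasilinear map $F:X\to Y$, produce a family of linear maps approximating $F$ within $CQ(F)$ on larger and larger pieces of $X$, assemble their ultralimit into one linear map $\tilde L:X\to Y_\U$ with $\|\delta F-\tilde L\|\le C'Q(F)$ (where $\delta$ is the diagonal embedding into an ultrapower $Y_\U$), and finally compose with a bounded projection $P:Y_\U\to Y$ supplied by the ultrasummand hypothesis on $Y$, so that $P\delta=\mathbf{I}_Y$. Then $L=P\tilde L$ is linear and $\|F-L\|\le\|P\|\,C'Q(F)$, which bounds $K(X,Y)$. Replacing the quasi-norm of $Y$ by an equivalent $p$-norm we may assume it continuous, so that $\|[(y_i)]_\U\|=\lim_\U\|y_i\|$ and $\delta$ is isometric; this only alters constants. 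Throughout we fix $K_0^+>K_0(X,Y)$ and let $\lambda$ be the relevant BAP constant.

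For (a) I would exploit that the BAP of $X$ gives, for each finite-dimensional $E\subseteq X$, a finite-rank $u_E$ with $u_E|_E=\mathbf{I}_E$ and $\|u_E\|\le\lambda$. The composition $F\circ u_E$ is quasilinear with $Q(F\circ u_E)\le\lambda Q(F)$ and, crucially, \emph{bounded}, since it factors through the finite-dimensional $u_E(X)$, on which every homogeneous quasilinear map is bounded (by the amalgamation estimate). Hence $K_0$ provides a linear $L_E$ with $\|F\circ u_E-L_E\|\le K_0^+\lambda Q(F)$; restricting to $E$ and using $u_E|_E=\mathbf{I}_E$ yields $\|F(x)-L_E(x)\|\le K_0^+\lambda Q(F)\|x\|$ for all $x\in E$. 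Directing the finite-dimensional subspaces by inclusion, I take an ultrafilter $\U$ containing every tail $\{E'\supseteq E\}$ and set $\tilde L(x)=[(\hat L_E(x))]_\U$, where $\hat L_E(x)=L_E(x)$ if $x\in E$ and $\hat L_E(x)=F(x)$ otherwise; this truncation keeps the family bounded, and since $\{E:E\ni x\}\in\U$ only the good entries survive. Linearity of $\tilde L$ holds because for $E\supseteq\mathrm{span}(x,y)$ (a tail, hence in $\U$) one has $\hat L_E(x+y)=L_E(x+y)=L_E(x)+L_E(y)$. The estimate $\|\delta F(x)-\tilde L(x)\|\le K_0^+\lambda Q(F)\|x\|$ is then immediate, and composing with $P$ finishes (a).

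For (b) the BAP is on $Y$: take a net $v_\beta:Y\to Y$ of finite-rank operators with $\|v_\beta\|\le\lambda$ and $v_\beta\to\mathbf{I}_Y$ pointwise. The map $v_\beta\circ F$ is quasilinear with $Q(v_\beta\circ F)\le\lambda Q(F)$ and takes values in the finite-dimensional $v_\beta(Y)$; it need not be bounded, but since $X$ is a $\mathcal K$-space every quasilinear map of $X$ into a finite-dimensional space is trivial, so $v_\beta\circ F=\ell_\beta+b_\beta$ with $\ell_\beta$ linear and $b_\beta$ bounded. As $Q(b_\beta)=Q(v_\beta\circ F)\le\lambda Q(F)$, applying $K_0$ to the bounded map $b_\beta$ gives a linear $m_\beta$ with $\|b_\beta-m_\beta\|\le K_0^+\lambda Q(F)$; then $\Lambda_\beta=\ell_\beta+m_\beta$ is linear and $\|v_\beta\circ F-\Lambda_\beta\|\le K_0^+\lambda Q(F)$. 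Since $\|v_\beta F(x)\|\le\lambda\|F(x)\|$ the family $(\Lambda_\beta(x))_\beta$ is bounded, so along an ultrafilter $\U$ containing the tails of the net we set $\tilde L(x)=[(\Lambda_\beta(x))]_\U$, which is linear. Because $v_\beta F(x)\to F(x)$, one obtains $\lim_\U\|F(x)-\Lambda_\beta(x)\|\le\Delta_Y K_0^+\lambda Q(F)\|x\|$, that is $\|\delta F-\tilde L\|\le\Delta_Y K_0^+\lambda Q(F)$; composing with $P$ finishes (b).

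The step I expect to be delicate is precisely the passage to the ultralimit, where the non-continuity of $F$ is felt and one cannot simply take a limit of the approximants in $Y$. In (a) this forces the sharp ``identity on $E$'' form of the BAP (so that $F\circ u_E=F$ on $E$, keeping the family bounded) together with the truncation $\hat L_E$, and then the ultrasummand property of $Y$ to descend from $Y_\U$ back to $Y$. In (b) the analogous difficulty is that $v_\beta\circ F$ fails to be bounded, and the entire role of the $\mathcal K$-space hypothesis is to absorb the unbounded linear part $\ell_\beta$ before $K_0$ can be applied to the genuinely bounded remainder $b_\beta$.
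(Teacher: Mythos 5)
Your proof is correct and follows essentially the same route as the paper: index a family of finite-rank approximations, use $K_0$ (after the $\mathcal K$-space hypothesis absorbs the unbounded linear part in (b)) to get linear approximants with uniform bounds, linearize in the ultrapower $Y_\U$ along an ultrafilter refining the tail filter, and descend via the ultrasummand projection. The only (harmless) deviation is in (b), where the paper indexes by finite-dimensional subspaces $E\subseteq Y$ with operators fixing $E$ exactly and truncates by $1_E(F(x))$, whereas you use a general BAP net with mere pointwise convergence and no truncation --- legitimate because your $\Lambda_\beta$ are globally linear and pointwise uniformly bounded, so the ultralimit is linear automatically.
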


\begin{proof}
By the Aoki-Rolewicz theorem we may assume the quasi-norm of $Y$ is continuous.

 To prove (a), let $\mathscr F$ denote the family of all finite dimensional
subspaces of $X$ ordered by inclusion. Let $\mathscr{U}$ be any
ultrafilter refining the Fr\'echet (order) filter on
$\mathscr{F}$ and let us consider the corresponding ultrapower
$Y_\mathscr{U}$. Since $Y$ is an ultrasummand, there is a bounded
linear projection $p: Y_\mathscr{U}\to Y$. Since $X$ has the BAP,
for each $E\in \mathscr F$ we may fix a finite rank $u_E\in \mathfrak L(X)$ fixing $E$ with $\|u_E\|\leq \lambda$, where $\lambda$ is the ``BAP constant'' of $X$. To
conclude these prolegomena, let $1_E: X \to X $ denote the
characteristic function of $E$; i.e., $1_E(x)=1$ if $x\in E$ and
$1_E(x)=0$ otherwise.

Now, let $F:X\to Y$ be quasilinear, with $Q(F)\leq 1$. We
define a mapping $G: X\to \ell_\infty(\mathscr F,Y)$ as
$$G(x)=(1_E(x)F(u_E(x)))_{E\in \mathscr F}.$$
(Observe that the family is bounded.)
 When $x\in E$ the $E$-th coordinate of
$G(x)$ is $F(x)$. Since for fixed $x\in X$ the set $\{E\in\mathscr F:x\in E\}$ belongs to $\mathscr U$, it follows that $q_\U \circ G = \delta\circ  F$. Consequently, $F=p\circ q_\U \circ G$. The map
$F\circ u_E:X\to Y$ is quasilinear and bounded because quasilinear maps
are bounded on finite dimensional spaces. Thus there is $\ell_E:
X\to Y$ such that
$$\|\ell_E-F\circ u_E\| \leq K_0^+Q(F\circ u_E) \leq   K_0^+Q(F)\|u_E\|\leq K_0^+\lambda,$$
provided $K_0^+>K_0(X,Y)$.
This allows us to define a (probably nonlinear) map $\ell: X \to \ell_\infty(\mathscr F,Y)$ by
$$\ell(x)=(1_E(x)\ell_E(x))_{E\in \mathscr F}.$$
The definition makes sense because the family $(1_E(x)\ell_E(x))_{E\in \mathscr F}$ is bounded for each $x\in X$. In fact, one has
$$
\|G(x)-\ell(x)\|_\infty= \sup_{E\in\mathscr F}\| 1_E(x)(F(u_E(x))-\ell_E(x))   \|_Y\leq K_0^+\lambda\|x\|.
$$
Hence if we put $L=q_\mathscr U \circ\ell$, then $\|q_\mathscr U \circ G-L\|\leq\|G-\ell\|\leq  K_0^+\lambda$.

Now, the point is that $L$ is linear: it is obviously homogeneous and moreover, given $x,y\in X$, the set $\{E\in\mathscr F: x,y\in E\}$ belongs to $\mathscr U$. For these $E$ one has $1_E(x+y)\ell_E(x+y)=1_E(x)\ell_E(x)+1_E(y)\ell_E(y)$ and so
$$
L(x+y)= [(1_E(x)\ell_E(x))_E]_\mathscr U  + [(1_E(y)\ell_E(y))_E]_\mathscr U=L(x)+L(y).
$$
Therefore $p\circ L: X\to Y$ is a linear map and
$$\|F-p\circ L\|=\|p\circ q_\U \circ G - p\circ L\| \leq \|p\|K_0^+\lambda.$$
Thus, every quasilinear map from $X$ to $Y$ is trivial and $K(X,Y)$ is finite, by \cite[Proposition 3.3]{kalton78}).

The proof of (b) is analogous. This time the index set is the family of all finite dimensional subspaces of $Y$
(instead of $X$) which we denote again by $\mathscr F$. For each $E\in\mathscr F$ we take a finite rank
operator $u_E\in \mathfrak L(Y)$ such that $u_E(y)=y$ for $y\in
E$, with $\|u_E\|\leq \lambda$.

Suppose $F:X\to Y$ is quasilinear, with $Q(F)\leq 1$. As before, we fix $K_0^+>K_0(X,Y)$. For each $E\in\mathscr F$, consider the composition $u_E\circ F$ as a map from $X$ to $u_E(Y)$, a finite dimensional space. As $X$ is a $\mathcal K$-space there is a linear
map $\ell_E: X\to u_E(Y)$ at finite distance from $u_E\circ F$.
Considering  now the difference $u_E\circ F-\ell_E$ as a  bounded homogeneous map from $X$ to $Y$ we have $Q(u_E\circ
F-\ell_E)=Q(u_E\circ F)\leq \lambda$ and so there is a linear
map $\ell'_E: X\to Y$ such that
$$
\|(u_E\circ F-\ell_E)-\ell'_E \|= \|u_E\circ F-(\ell_E+\ell'_E )\|\leq \lambda K_0^+
$$
Taking $L_E=\ell_E+\ell'_E$ we define a mapping $L: X\to \ell_\infty(\mathscr F,Y)$
as
$$L(x)= (1_E(F(x))L_E(x)).$$
The map is well defined (i.e., the family is bounded) since when
$F(x)\in E$, then $u_E\circ F(x)=F(x)$ and $\|L_E(x)-F(x)\|\leq
K_0^+\lambda \|x\|$; hence
$$
\sup_{E\in\mathscr F}\| 1_E(F(x))L_E(x) \|\leq \Delta_Y\left
(\|F(x)\|+K_0^+\lambda \|x\|\right ).
$$

Let $\mathscr U$ be an ultrafilter refining the Fr\'echet filter
on $\mathscr F$. Let us check that, against intuition, $q_\U\circ L:
X\to Y_\mathscr U$ is linear. It is obviously homogeneous; and it
is also additive: indeed, if $x,y\in X$, then as long as $E$
contains $F(x), F(y)$ and $F(x+y)$ one has
$$
1_E(F(x+y))L_E(x+y)=1_E(F(x))L_E(x)+1_E(F(y))L_E(y)
$$
which yields $q_\U (L(x+y))=q_\U (L(x)) + q_\U (L (y))$. Finally, if
$p:Y_\mathscr U\to Y$ is a bounded projection then $p\circ q_\U\circ L:
X\to Y$ is a linear map at finite distance from $F$: observe that
$u_E(F(x))=F(x)$ as long as $F(x)\in E$, so $\delta (F (x)) = [
(1_E(F(x)) u_E(F(x)))]_\U$. One then has, $\|\delta\circ F- L\| \leq K_0^+\lambda$, and thus
$$\|F-p\circ L\|=
\|p\circ\delta \circ F-p\circ L\|\leq \|p\| \leq K_0^+\lambda.$$
This shows that $F$ is trivial, and since $F$ is arbitrary we conclude that $K(X,Y)$ is finite.
\end{proof}

$\bigstar$ Theorems~\ref{criterion} and \ref{bap} together
indicate that for a wide class of Banach and quasi-Banach spaces
$\Ext(X,Y)$ is Hausdorff only if it is zero. Thus, for instance
$\Ext(X,Y)$ fails to be Hausdorff if $X$ and $Y$ are super
reflexive and one of them has a basis.

$\bigstar$ The spaces appearing in Example~\ref{infty} are rather
specific. It should be interesting to have other examples of
Banach spaces for which $\Ext(X,Y)$ is nonzero and Hausdorff.
Having separable counterexamples should be interesting, even
allowing Banach space extensions only.
To be more precise, if $X$ and $Y$ are Banach spaces, let us denote by
$\Ext_{\bf B}(X,Y)$ the space of extensions $0\to Y\to Z\to X\to 0$ in which $Z$ is a Banach space, where we identify equivalent extensions.

The simplest way to describe the ``natural'' topology in
$\Ext_{\bf B}(X,Y)$ (which in general does not agree with the
restriction of the topology in $\Ext(X,Y)$ is to take a projective
presentation of $X$ in the category $\bf B$ of Banach spaces and
operators, say
$$
0\longrightarrow K\longrightarrow \ell_1(I)\stackrel{\vp}\longrightarrow X\longrightarrow 0,
$$
with $K=\ker \vp$ and to consider $$
\Ext_{\bf B}^\varpi(X,Y)= \frac{\mathfrak L(K, Y)}{\varpi^* (\mathfrak
L(\ell_1(I), Y))}
$$
with the obvious seminorm.

$\bigstar$ Let $\ell_1\to L_1$ be a quotient map and $K$ its
kernel. The space $\Ext_{\bf B}^\vp(L_1, K)$ is obviously nonzero.
Is it Hausdorff in its natural topology? Needless to say the space
$\Ext(L_1, K)$ (of extensions of quasi-Banach spaces) is not
Hausdorff as it contains a complemented copy of $\Ext(L_1, \K)$,
which is nonzero \cite{kalton78,ribe} and fails to be Hausdorff in
view of Proposition~\ref{criterion} and Theorem~\ref{bap}(a).

$\bigstar$ Kalton and Ostrovskii asked (see the comments preceding
Theorem 3.7 in \cite{kaltostro}) if $K_0(X,\K)<\infty$ implies
$K(X,\K)<\infty$ for all Banach spaces $X$. They state without
proof that the answer is affirmative when $X$ has the BAP.
Theorem~\ref{bap}(a) is of course stronger.

Theorem~\ref{criterion} shows that in the quasi-Banach setting the
answer to Kalton-Ostrovski question is no. Indeed, it is shown in
\cite[Corollary 4]{cabecastlong} that if $Z$ is a quasi-Banach
$\mathcal K$-space with trivial dual and $Y$ is a subspace of $Z$,
then for $X=Z/Y$ one has $Y^*=\Ext(X,\K)$, up to linear
homeomorphism. In particular, for $0<p<1$ one has that
$\Ext(L_p/\K,\K) = \K$ is nonzero, hence $K(L_p/\K,\K)=\infty$,
and Hausdorff, hence $K_0(L_p/\K,\K)<\infty$.

$\bigstar$ Concerning Theorem~\ref{bap} (b) it is worth noting
that if $X$ fails to be a $\mathcal K$-space, then $\Ext(X,Y)\neq
0$ for all quasi-Banach spaces $Y$ having nontrivial dual (a
property implied by the BAP).

$\bigstar$ We do not know if the condition ``$Q([F])=0$ for
\emph{all} quasilinear maps $F:X\to Y$" implies $\Ext(X,Y)=0$. Of
course it is possible to have $Q[F]=0$ for \emph{some} nontrivial
$F$. 

\section{Stability constants}\label{affine}
In this final Section we explain why Example~\ref{infty} solves the main problem raised in \cite{lp}.
Let $D$ be a convex set in a linear space $X$ and $Y$ a Banach space. A function $f:D\to Y$ is said to be $\delta$-affine if
$$
\|f(tx+(1-t)y)-tf(x)-(1-t)f(y)\|_Y\leq \delta
$$
for every $x,y\in D$ and every $t\in[0,1]$. If the preceding inequality holds merely for $t={1\over 2}$ we say that $f$ is $\delta$-Jensen.
Of course $0$-affine functions are just the popular affine functions and $0$-Jensen functions are the so called Jensen (or midpoint affine) functions.

Following \cite{lp} we define $A_0(D,Y)$ as the infimum of those constants $C$ for which the following statement holds: if $f:D\to Y$ is a \emph{uniformly bounded} $\delta$-affine function, then there is an affine function $a:D\to Y$ such that $\|f(x)-a(x)\|_Y\leq C\delta$. We define $A(D,Y)$ as before, but omitting the words ``uniformly bounded''. Also, we define $J_0(D,Y)$ and $J(D,Y)$ by replacing ``affine'' by ``Jensen'' everywhere in the definition of $A_0(D,Y)$ and $A(D,Y)$, respectively.

Laczkovich and Paulin proved in \cite[Theorem 2.1]{lp} that
$$
A(D,Y)\leq J(D,Y)\leq 2A(D,Y).
$$
On the other hand, it trivially follows from \cite[Lemma 2.2(iii)]{lp} that every uniformly bounded $\delta$-Jensen function is $2\delta$-affine and, therefore,
$$
A_0(D,Y)\leq J_0(D,Y)\leq 2A_0(D,Y).
$$
The ``second Whitney constant for bounded functions'' $w_2^Y(D)$ also appears in \cite{lp}. We will not use it in this paper since it is clear from the discussion preceding Theorem 3.1 in \cite{lp} that $w_2^Y(D)={1\over 2}J_0(D,Y)$. We refer the reader to \cite{brudkalt} for a comprehensive study of the Whitney constants of the balls of various Banach and quasi-Banach spaces.

\begin{proposition}
Let $X$ and $Y$ be Banach spaces. If $B$ is the unit ball of $X$, then:
\begin{itemize}
\item[(a)] $K(X,Y)\leq A(B,Y)\leq 3+6K(X,Y)$.
\item[(b)] $K_0(X,Y)\leq A_0(B,Y)\leq 3+6K_0(X,Y)$.
\end{itemize}
\end{proposition}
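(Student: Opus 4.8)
The plan is to build a dictionary between $\delta$-affine functions $f\colon B\to Y$ and homogeneous quasilinear maps $F\colon X\to Y$, under which affine functions correspond to linear maps and the constants $\delta$ and $Q(F)$ are comparable up to absolute factors. Under this dictionary the left-hand inequalities come from \emph{restricting} a quasilinear map to $B$, and the right-hand ones from \emph{homogeneously extending} a $\delta$-affine function. I would do (a) in detail and note that (b) is identical once boundedness is tracked.

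For $K(X,Y)\le A(B,Y)$ I would take a quasilinear $F\colon X\to Y$ and restrict it to $f=F|_B$. Homogeneity gives $F(tx+(1-t)y)-tF(x)-(1-t)F(y)=F(tx+(1-t)y)-F(tx)-F((1-t)y)$, so the quasilinear estimate shows $\|f(tx+(1-t)y)-tf(x)-(1-t)f(y)\|\le Q(F)(t\|x\|+(1-t)\|y\|)\le Q(F)$; thus $f$ is $Q(F)$-affine. For any $A^+>A(B,Y)$ this produces an affine $a=L+c$ (with $L$ linear and $c=a(0)$) satisfying $\|F(x)-L(x)-c\|\le A^+Q(F)$ on $B$. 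Since $F$ and $L$ are odd, evaluating at $x$ and at $-x\in B$ gives $\|F(x)-L(x)+c\|\le A^+Q(F)$ as well, and averaging the two kills the constant $c$, leaving $\|F(x)-L(x)\|\le A^+Q(F)$; by homogeneity $\|F-L\|\le A^+Q(F)$, so $K(X,Y)\le A^+$, and letting $A^+\downarrow A(B,Y)$ finishes. For (b) the same works verbatim, since $F|_B$ is uniformly bounded when $F$ is.

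For $A(B,Y)\le 3+6K(X,Y)$ I would first subtract the constant $f(0)$ (which leaves $\delta$-affineness unchanged) to assume $f(0)=0$, then replace $f$ by its odd part $g(x)=\tfrac12\bigl(f(x)-f(-x)\bigr)$. A direct computation shows $g$ is again $\delta$-affine, and the instance $t=\tfrac12,\ y=-x$ of $\delta$-affineness gives $\|f-g\|\le\delta$ on $B$. Now extend $g$ homogeneously by $F(x)=\|x\|\,g(x/\|x\|)$ and $F(0)=0$; oddness of $g$ is exactly what makes $F$ genuinely homogeneous. Writing $x\in B$ as the convex combination $\|x\|(x/\|x\|)+(1-\|x\|)0$ and using $g(0)=0$ yields $\|g-F\|\le\delta$, hence $\|f-F\|\le2\delta$, on $B$. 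The crucial point is that $F$ is quasilinear: after normalizing $\|x\|+\|y\|=1$, the vector $x+y$ equals the honest convex combination $\|x\|(x/\|x\|)+\|y\|(y/\|y\|)$ of two unit vectors, so one application of $\delta$-affineness of $g$ gives $\|g(x+y)-F(x)-F(y)\|\le\delta$, while $x+y\in B$ gives $\|F(x+y)-g(x+y)\|\le\delta$; together $\|F(x+y)-F(x)-F(y)\|\le2\delta$, and homogeneity upgrades this to $Q(F)\le2\delta$. Feeding $F$ into the definition of $K(X,Y)$ produces a linear $L$ with $\|F-L\|\le K^+Q(F)$, and then $a=L+f(0)$ satisfies $\|f-a\|\le(2+2K^+)\delta$ on $B$, comfortably inside the claimed $3+6K(X,Y)$. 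For (b) a uniformly bounded $f$ gives a bounded $g$, hence a bounded $F$, so $K_0$ may be used in place of $K$.

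The main obstacle is the quasilinearity estimate for the homogeneous extension $F$: everything hinges on recognizing, after rescaling, that $x+y$ is an exact convex combination of the normalized vectors, so that a single $\delta$-affine inequality for $g$ controls the defect $F(x+y)-F(x)-F(y)$. Two bookkeeping points also need care: that passing to the odd part $g$ preserves $\delta$-affineness (this is what allows $F$ to be homogeneous at all, rather than merely positively homogeneous), and that the ``linear part'' $L$ of the affine approximant genuinely extends to a linear map on all of $X=\bigcup_\lambda\lambda B$. I would carry out the argument over the real scalars, since $\delta$-affineness is a real-convexity notion and the extension $F(x)=\|x\|\,g(x/\|x\|)$ is homogeneous only with respect to real scalars.
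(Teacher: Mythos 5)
Your proposal is correct, and for the substantive half --- the inequality $A(B,Y)\leq 3+6K(X,Y)$ --- it takes a genuinely different route from the paper. Both proofs handle the left-hand inequalities identically (restrict $F$ to $B$, approximate by an affine $a$, and use oddness of $F$ to symmetrize away the constant term, exactly as the paper does via $L(x)=\tfrac12(a(x)-a(-x))$). For the right-hand inequalities the paper also first passes to the odd part $f_1$ of $f$, but then proceeds line by line: it invokes the one-dimensional stability result $A([0,1],Y)\leq 2$ of Laczkovich--Paulin to approximate $f_1$ on each line through the origin by a linear map, glues these into a homogeneous $f^*$ with $\|f_1-f^*\|\leq 2$ on $B$, and deduces $Q(f^*)\leq 6$, arriving at the constant $1+2+6K^+$. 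You instead extend the odd part $g$ radially, $F(x)=\|x\|\,g(x/\|x\|)$, and prove quasilinearity of $F$ directly from the single observation that when $\|x\|+\|y\|=1$ the vector $x+y$ is the exact convex combination $\|x\|\frac{x}{\|x\|}+\|y\|\frac{y}{\|y\|}$ of unit vectors, together with $\|g-F\|\leq\delta$ on $B$ (from $x=\|x\|\frac{x}{\|x\|}+(1-\|x\|)\cdot 0$). This buys two things: the argument is self-contained (no appeal to the external one-dimensional lemma), and it gives the sharper constants $Q(F)\leq 2\delta$ and $A(B,Y)\leq 2+2K(X,Y)$ (resp. $A_0(B,Y)\leq 2+2K_0(X,Y)$), which of course implies the stated bound. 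Your bookkeeping points are the right ones, and none is a gap: $\delta$-affineness of the odd part, the extension of the linear part of $a$ from $B$ to $X$ (the paper also asserts this without proof), and the restriction to real scalars --- the paper's line-by-line construction is likewise only real-homogeneous, so you are no worse off than the original on that score.
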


\begin{proof} We write the proof of (a). The proof of (b) is the same, adding the word ``bounded'' from time to time to the text.

Let us check the first inequality. Suppose $F:X\to Y$ is quasilinear, with $Q(F)= 1$. Then the restriction of $F$ to $B$ is 1-affine. Indeed, taking $x,y\in B$ and $t\in [0,1]$ we have
\begin{align*}
\|F(tx+(1-t)y)-tF(x)-(1-t)F(y)\|&=\|F(tx+(1-t)y)-F(tx)-F((1-t)y)\|\\
&\leq \|tx\|+\|(1-t)y\|\leq 1.
\end{align*}
If $A^+$ is any number greater than $A(B,Y)$, then there is an affine function $a:B\to Y$ such that $\|F(x)-a(x)\|\leq A^+$ for every $x\in B$. It is obvious that there is a (unique) linear map $L:X\to Y$ such that $L(x)={1\over 2}(a(x)-a(-x))$ for $x\in B$. But $F$ is odd and so $\|F(x)-L(x)\|\leq A^+$ for every $x\in B$. Now, by homogeneity, we have
$$
\|F(x)-L(x)\|\leq A^+\|x\|\quad\quad(x\in X)
$$
and, therefore, $K(X,Y)\leq A(B,Y)$.

As for the second inequality in (a) let $f:B\to Y$ be 1-affine. Without no loss of generality we may assume $f(0)=0$. Put
$$
f_1(x)=\frac{f(x)-f(-x)}{2}\quad\quad\text{and}\quad\quad
f_2(x)=\frac{f(x)+f(-x)}{2}.
$$
Clearly, $f=f_1+f_2$. Moreover, $f_1$ is $1$-affine on $B$ and $\|f_2(x)\|\leq 1$ for every $x\in B$. Let $\Lambda$ be the set of lines of $X$ passing through the origin. Given $\lambda\in \Lambda$, let $f_\lambda$ denote the restriction of $f_1$ to $\lambda\cap B$, a set which is affinely equivalent to $[0,1]$. As $f_\lambda$ is 1-affine and $A([0,1],Y)\leq 2$ by \cite[Lemma 2.2(iii)]{lp} there is a linear map $f_\lambda^*:\lambda\to Y$ such that $\|f_1(x)-f_\lambda^*(x)\|_Y\leq 2$ for $x\in\lambda\cap B$.

Define $f^*:X\to Y$ taking
$
f^*(x)=f^*_\lambda(x)$ 
 for $x\in\lambda$.
Then $f^*$ is homogeneous on $X$ and $3$-affine on $B$. Thus, for $x,y\in B$ we have
$$
\|f^*(x+y)-f^*(x)-f^*(y)\|= 2 \left\|f^*\left(\frac{x+y}{2}\right)-\frac{f^*(x)}{2}-\frac{f^*(y)}{2}\right\|\leq 6.
$$
And, by homogeneity, for arbitrary $x,y\in X$,
$$
\|f^*(x+y)-f^*(x)-f^*(y)\|\leq 6\max(\|x\|,\|y\|)\leq 6(\|x\|+\|y\|),
$$
that is, $Q(f^*)\leq 6$.

Now, if $K^+$ is a number greater than $K(X,Y)$, there is a linear map $L:X\to Y$ such that $\|f^*(x)-L(x)\|\leq 6K^+\|x\|$ for all $x\in X$. Hence for $x\in B$ we have
$$
\|f(x)-L(x)\|\leq \|f(x)-f_1(x)\|+\|f_1(x)-f^*(x)\|+\|f^*(x)-L(x)\|\leq 1+2+6K^+
$$
and we are done.
\end{proof}

\begin{corollary}
Let $B$ be the unit ball of a nonseparable WCG Banach $\mathcal
K$-space. Then $A_0(B,c_0)$ is finite while $A(B,c_0)$ is
not.\hfill$\square$
\end{corollary}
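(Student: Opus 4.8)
The plan is to derive the Corollary directly from Example~\ref{infty} and the preceding Proposition, with essentially no new work: the two results have been arranged precisely so that they dovetail. Let $X$ be the given nonseparable WCG Banach $\mathcal K$-space, let $B$ be its unit ball, and take $Y=c_0$ as the target Banach space throughout.

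First I would quote the final assertion of Example~\ref{infty}, which says that for exactly such an $X$ one has $K_0(X,c_0)<\infty$ while $K(X,c_0)=\infty$. Here parts (a) and (b) of that Example supply the two halves separately: part (a) uses that $X$ is nonseparable and WCG to give $K(X,c_0)=\infty$, and part (b) uses that $X$ is WCG and a $\mathcal K$-space to give $K_0(X,c_0)<\infty$.

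Then I would feed these two facts into the two-sided comparison of the preceding Proposition, applied with this $X$ and with $Y=c_0$. The right-hand inequality of part (b), $A_0(B,c_0)\le 3+6K_0(X,c_0)$, immediately upgrades the finiteness of $K_0(X,c_0)$ to the finiteness of $A_0(B,c_0)$. In the opposite direction, the left-hand inequality of part (a), $K(X,c_0)\le A(B,c_0)$, together with $K(X,c_0)=\infty$, forces $A(B,c_0)=\infty$. Combining the two conclusions yields exactly the statement of the Corollary.

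I do not expect any genuine obstacle here: once the Proposition and the Example are in place the argument is a one-line bookkeeping step. The only point worth a glance is the trivial one that $c_0$ is a Banach space, so that the Proposition (whose hypotheses demand Banach $X$ and $Y$) applies verbatim; all the real content has already been absorbed into the quasilinear-map machinery of the earlier sections.
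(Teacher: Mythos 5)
Your proposal is correct and is exactly the argument the paper intends: the Corollary is stated with an empty proof precisely because it follows by combining Example~\ref{infty} (giving $K_0(X,c_0)<\infty$ and $K(X,c_0)=\infty$) with the two inequalities of the preceding Proposition, just as you do. No gaps; the remark that $c_0$ is a Banach space so the Proposition applies is the only hypothesis check needed, and you made it.
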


\end{document}